\documentclass[reqno]{amsart}

\usepackage{hyperref}
\usepackage{amsmath,amsthm,amssymb,amscd}
\usepackage{todonotes}

\usepackage{tikz}
\usetikzlibrary{angles,calc,arrows.meta}

\usepackage[
  style=numeric,
  sorting=nyt,
  giveninits=true,
  doi=false,
  url=false,
  isbn=false,
  maxnames=10,
]{biblatex}
\DeclareFieldFormat[article,incollection,inproceedings,misc]{title}{#1}
\DeclareFieldFormat[article,incollection,inproceedings,misc]{volume}{\mkbibbold{#1}}
\DeclareFieldFormat{pages}{#1}
\renewbibmacro{in:}{}
\AtEveryBibitem{\ifentrytype{book}{\clearfield{pages}}{}}
\AtEveryBibitem{\clearfield{eprintclass}}
\AtEveryBibitem{\clearlist{language}}

\newtheorem{thm}{Theorem}
\newtheorem{prop}[thm]{Proposition}
\newtheorem{cor}[thm]{Corollary}
\newtheorem{lem}[thm]{Lemma}

\theoremstyle{definition}
\newtheorem{dfn}[thm]{Definition}
\newtheorem{rem}[thm]{Remark}

\newtheorem{ques}[thm]{Question}

\theoremstyle{remark}
\newtheorem*{org}{Organization}
\newtheorem*{ack}{Acknowledgements}

\numberwithin{thm}{section}
\numberwithin{equation}{section}

\DeclareMathOperator{\In}{Int}
\DeclareMathOperator{\Conv}{Conv}
\DeclareMathOperator{\sgn}{sgn}

\title{Busemann G-spaces with convex balls}
\author[T. Fujioka]{Tadashi Fujioka}
\address[T. Fujioka]{Department of Applied Mathematics, Fukuoka University, Fukuoka 814-0180, Japan}
\email{tfujioka210@gmail.com}

\author[S. Gu]{Shijie Gu}
\address[S. Gu]{Department of Mathematics, Northeastern University, Shenyang, Liaoning, China, 110004}
\email{shijiegutop@gmail.com}

\date{\today}
\subjclass[2020]{53C70, 53C23, 52A35, 57N16}
\keywords{Busemann G-spaces, topological manifolds, convexity, Helly theorem}

\begin{document}

\begin{abstract}
We prove that any Busemann G-space such that every sufficiently small metric ball is convex is a topological manifold.
The key ingredient in the proof is Ivanov's Helly theorem.
The appendix contains a counterexample to a question of Berestovskii--Halverson--Repov\v s.
\end{abstract}

\maketitle

\section{Introduction}\label{sec:intro}

A \emph{G-space}, introduced by Busemann \cite{Bu55}, is a qualitative generalization of a Finsler manifold in terms of the geometry of geodesics.
It is a complete, locally compact geodesic space such that every small shortest path admits a local unique extension.
See Section \ref{sec:g} for the precise definition.
There are two long-standing problems concerning G-spaces, proposed by Busemann himself \cite[pp.403,2,49]{Bu55}:

\begin{enumerate}
\item Is any G-space finite-dimensional? (in the sense of topological dimension)
\item Is any finite-dimensional G-space a topological manifold?
\end{enumerate}

Both conjectures remain unsolved in their full generality.
See \cite{An18} for a survey and \cite{HR08} for the topological background.
The manifold conjecture (2) is known to hold in dimensions up to four (\cite{Bu55,Krak68,Th96}).
Moreover, it is relatively easy to verify both conjectures under \emph{quantitative} assumptions, such as synthetic curvature bounds, which further yield a Riemannian or Finsler structure.
See \cite{FGfin} and references therein.
Here, we address more \emph{qualitative} assumptions; in other words, we do not impose any kind of differentiable structure or curvature bounds.

In this qualitative direction, essentially the only general result known to us is Berestovskii's solution \cite{Be77} to the finite-dimensionality conjecture (1) under the assumption of the convexity of metric balls (see also a generalization by Berestovskii--Halverson--Repov\v s \cite{BHR11}).
The convexity of metric balls can be viewed as a qualitative generalization of non-positive curvature (cf.\ \cite{Bu48,An14,An17nor,FGtop}).
In this paper, we improve on Berestovskii's result to resolve the manifold conjecture (2) under the same (in fact, slightly weaker) convexity assumption.

\begin{thm}\label{thm:main}
Let $X$ be a G-space such that every sufficiently small metric ball is convex.
Then $X$ is a topological manifold.
\end{thm}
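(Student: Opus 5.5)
We show that a small metric sphere $S(p,r)$ is homeomorphic to $S^{n-1}$, where $n=\dim X$; finiteness of $n$ is Berestovskii's theorem \cite{Be77}. In a G-space a small ball $B(p,r)$ is homeomorphic to the cone over $S(p,r)$. Indeed, geodesics from $p$ extend uniquely and the radial projection $B(p,r)\setminus\{p\}\to S(p,r)$ is continuous. So it suffices to identify $S(p,r)$ with a sphere. The strategy is to build a nerve-type comparison with the boundary of a simplex, using convexity of balls together with Ivanov's Helly theorem as the combinatorial input.

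\textbf{Steps.} Fix $p$ and a small $r$ such that all balls of radius at most $4r$ near $p$ are convex.

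\emph{Step 1.} Use Ivanov's Helly theorem for convex subsets of the $n$-dimensional space $X$. A finite family of convex sets (here small balls) has nonempty intersection provided every $n+1$ of them intersect. Consequently, in $S(p,r)$ (of dimension $n-1$), the intersection pattern of small "caps" $S(p,r)\cap B(q,\rho)$ is controlled by $(n+1)$-fold intersections.

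\emph{Step 2.} Show that the caps cut out on $S(p,r)$ by convex balls centered at points far from $p$ are contractible. Convexity of balls and unique extension give a geodesic contraction of each cap to a point, so these caps form good covers of the sphere at all small scales. Nerve lemmas then show that $S(p,r)$ is an ANR homotopy equivalent to the nerve of such covers.

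\emph{Step 3.} Prove that $S(p,r)$ has the homotopy type of $S^{n-1}$. Take $n+1$ balls of radius slightly larger than the distance to $p$, centered at points $q_0,\dots,q_n$ near $p$ in "general position". By Helly, their caps on the sphere have the intersection pattern of the boundary of an $n$-simplex: every $n$ of them meet, but all $n+1$ do not. We choose the $q_i$ so that the full intersection $\bigcap B(q_i,\cdot)$ sits inside the open ball around $p$, which is possible because otherwise dimension drops, contradicting $\dim X=n$.

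\emph{Step 4.} Upgrade the homotopy sphere to a manifold. Local contractibility follows from convexity, and $S(p,r)$ is a homology $(n-1)$-manifold because its links are spheres of smaller dimension, by induction on $n$ via the cone structure. Combining this with the G-space property of no conjugate branching gives the disjoint disks property. The $n\ge5$ case then follows from Edwards' cell-like/resolution theorem, with Quinn's obstruction vanishing because $X$ is a manifold on a dense open set. The low-dimensional cases are covered by \cite{Krak68,Th96}.

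\textbf{Main obstacle.} The hardest step is Step 3: extracting the precise combinatorics of a simplex boundary from Helly, in particular forcing the $(n+1)$-fold intersection to be empty on the sphere while all $n$-fold ones are not. I would first try to choose $q_i$ along $n+1$ geodesics from $p$ that are spread out maximally. I would then use that, by Berestovskii's argument, the space of directions has covering dimension $n-1$, so Helly's number is exactly attained.
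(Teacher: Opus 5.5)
Your proposal has a genuine gap, and the problem is structural. Step~4 carries the whole difficulty of the Busemann conjecture, and as written it is circular. You dispose of Quinn's obstruction ``because $X$ is a manifold on a dense open set''. But $X$ is topologically homogeneous, so having even one manifold point is equivalent to the theorem.

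The disjoint disks property is also not a known consequence of the G-space axioms. So even a complete proof that $S(p,r)\simeq S^{n-1}$ would leave you exactly where the general conjecture is stuck. In the paper, the sphere statement (Corollary~\ref{cor:sph}) is deduced \emph{after} the theorem. It uses a manifold point on the sphere supplied by the theorem's proof, together with \cite{BHR11,Gu19}.

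Steps~2--3 also fail as stated:
\begin{itemize}
\item The caps $S(p,r)\cap B(q,\rho)$ are not convex, because the sphere is not. A geodesic contraction inside $B(q,\rho)\cap\bar B(p,r)$ leaves the sphere, and you give no reason why radial reprojection from $p$ stays in the cap. So you have no good cover to feed a nerve lemma.
\item Helly's theorem, used contrapositively, only says that a minimal subfamily with empty intersection has size at most $n+1$. It never forces the size to be exactly $n+1$, so it cannot produce the $\partial\Delta^n$ pattern you want.
\item The hypothesis is pointwise, so uniform (strict) convexity of all small balls near an arbitrary $p$ is not available. You need a Baire argument as in Lemma~\ref{lem:conv}, and that gives uniformity only on some open set.
\end{itemize}

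The missing idea is to produce a manifold point directly with distance coordinates, bypassing spheres, nerves and resolution theory.
\begin{itemize}
\item With uniform strict convexity near $o$, take an $r$-net $\{p_i\}_{i=1}^N$ of $\bar B(o,10r)$. Extending $yx$ past $x$ and using strict convexity gives, for $x\ne y$ in $\bar B=\bar B(o,r)$, an index $i$ with $d(p_i,y)>d(p_i,x)$.
\item Hence the convex sets $\bar B\cap(\bar B(p_i,d(p_i,x))\setminus\{x\})$ have empty intersection. Helly in the compact uniquely geodesic space $\bar B$ then yields $I$ with $|I|\le n+1$ and $C_I(x):=\bar B\cap\bigcap_{i\in I}\bar B(p_i,d(p_i,x))=\{x\}$.
\item The sets $H_I=\{x\in\bar B: C_I(x)=\{x\}\}$ are closed and cover $\bar B$, so by Baire some $H_I$ has nonempty interior $U$.
\item On $H_I$, dropping any one coordinate still gives an injective map to $\mathbb R^{|I|-1}$: for $x\ne y$ there are two distinct indices with opposite strict inequalities.
\item Therefore $n=\dim U\le |I|-1\le n$. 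Invariance of domain for ANR homology manifolds makes the image open in $\mathbb R^n$, and topological homogeneity spreads this manifold point to all of $X$.
\end{itemize}
Your ``otherwise the dimension drops'' intuition is exactly this Baire-plus-dimension count. It must be applied to the coordinate map, where injectivity and openness can be checked directly, not to a nerve of caps.
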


Here, the assumption means that for any $p\in X$, there exists $r>0$ such that every closed ball around $p$ of radius $\le r$ is convex.
In fact, it is enough to assume this condition for some nonempty open subset of $X$.
Furthermore, since we do not assume uniformity of the radius $r$, this is slightly weaker than Berestovskii's original assumption.
See Section \ref{sec:conv} for more details.

It is known that in general metric balls in a G-space need not be convex (\cite{BP80,BHR11}).
We also give such an example in Appendix \ref{sec:app}.
Thus Theorem \ref{thm:main} does not give a solution to the general conjecture.

The proof of Theorem \ref{thm:main} is outlined as follows.
Since any G-space is topologically homogeneous (\cite{Th96, BHR11}), to prove the manifold conjecture, it suffices to find one manifold point.
Our construction of a manifold point is indeed a refinement of the finite-dimensionality argument by Berestovskii \cite{Be77}.
We first choose sufficiently many points $\{p_i\}_{i=1}^N\subset X$ such that the distance map
\[(d(p_i,\cdot))_{i=1}^N:X\to\mathbb R^N\]
gives a local topological embedding, as Berestovskii did.
Then we use Ivanov's Helly theorem \cite{Iv14} (see Theorem \ref{thm:hel}) to find an $(n+1)$-point subset $I$ of $\{1,\dots,N\}$, where $n$ is the topological dimension of $X$, such that the corresponding distance map with one component removed
\[(d(p_i,\cdot))_{i\in I\setminus\{i_0\}}:X\to\mathbb R^n\]
gives the desired coordinates at some point.
Here, in addition to the Helly theorem, we make use of a Baire-type covering argument and the invariance of domain for homology manifolds (any finite-dimensional G-space is known to be an ANR integral homology manifold; \cite{Th96, BHR11}).

Geometrically speaking, what we are doing here is nothing other than finding $n+1$ points that span a nondegenerate $n$-simplex.
See Section \ref{sec:prf}, especially Remark \ref{rem:prf}.
Related arguments in the setting of synthetic upper curvature bounds can be found in \cite{Kl99, AKP24}.

Combining the above construction with the results of Berestovskii--Halverson--Repov\v s \cite{BHR11}, we obtain the following corollary.

\begin{cor}\label{cor:sph}
Let $X$ be a G-space as in Theorem \ref{thm:main}.
Then for any $p\in X$, every sufficiently small metric sphere around $p$ is homeomorphic to a sphere.
\end{cor}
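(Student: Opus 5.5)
We plan to derive Corollary \ref{cor:sph} from Theorem \ref{thm:main} together with the results of Berestovskii--Halverson--Repov\v s \cite{BHR11} on small metric spheres in G-spaces.

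First, Theorem \ref{thm:main} shows that $X$ is a topological manifold of some dimension $n$. In particular, $X$ is finite-dimensional, which is what the structural results of \cite{BHR11} require. For a finite-dimensional G-space, \cite{BHR11} (building on \cite{Th96}) gives the following picture near a point $p$ and for $r$ less than the local extendability radius. The small sphere $S(p,r)$ is a compact ANR homology $(n-1)$-manifold with the integral homology of $S^{n-1}$. The closed ball $B(p,r)$ is homeomorphic to the cone over $S(p,r)$, via the unique extension of shortest paths from $p$. Moreover, spheres of nearby radii are homeomorphic, through radial projection along geodesics from $p$. I would take these statements from \cite{BHR11} as given.

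Next, fix a sufficiently small $r$. The open ball $U(p,r)$ is homeomorphic to the open cone $\mathring{C}S(p,r)$, and it is an open subset of the $n$-manifold $X$. Hence $S(p,r)\times(0,r)$, which is the cone minus its vertex, is an $n$-manifold, and so $S(p,r)\times\mathbb R$ is a manifold. The cone point has a Euclidean neighborhood, so the link $S(p,r)$ is simply connected when $n\ge 3$. I would get this from $\pi_1$ of a deleted neighborhood: the deleted neighborhood $U(p,r)\setminus\{p\}\simeq S(p,r)$ must be simply connected at infinity near $p$, as it is for $\mathbb R^n\setminus\{0\}\simeq S^{n-1}$.

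The main obstacle is promoting ``$S(p,r)\times\mathbb R$ is a manifold and $S(p,r)$ is a homotopy sphere'' to ``$S(p,r)$ is itself a manifold''. Products with $\mathbb R$ can be manifolds without the factor being one, as for Cannon--Edwards type non-manifold homology spheres. My first approach avoids this issue. The point of the proof of Theorem \ref{thm:main} is that the distance coordinates $(d(p_i,\cdot))_{i\in I\setminus\{i_0\}}$ are produced at points where $n+1$ points span a nondegenerate simplex. I would rerun that construction with $p$ as one of the reference points, using the convexity of small balls around $p$ and Ivanov's Helly theorem as in Section \ref{sec:prf}. This should give, at a point $x\in S(p,r)$, local coordinates in which one coordinate is $d(p,\cdot)$ itself. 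That means $d(p,\cdot)$ is locally a coordinate projection, so $S(p,r)$, being a level set, is locally Euclidean of dimension $n-1$ near $x$.

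Once $S(p,r)$ is locally Euclidean at one point, I would combine this with the topological homogeneity of the sphere from \cite{BHR11}, or repeat the argument at every $x$, to conclude that $S(p,r)$ is a closed $(n-1)$-manifold. It is also a homotopy $(n-1)$-sphere, since it is simply connected with the homology of $S^{n-1}$, and for $n\le 2$ this is immediate. The generalized Poincar\'e conjecture in all dimensions (Newman, Freedman, Perelman) then shows that $S(p,r)$ is homeomorphic to $S^{n-1}$. The step I expect to be hardest is making the level-set coordinate argument work at every point of the sphere, or else verifying that \cite{BHR11} supplies homogeneity of $S(p,r)$.
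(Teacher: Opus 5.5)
Your level-set argument, your homotopy-sphere argument, and your use of the Poincar\'e conjecture are all fine. The homotopy-sphere argument becomes rigorous by interleaving cone levels with round Euclidean balls around $p$, and it can replace the citation of \cite{Gu19}. Also, \cite[Theorem 1.1]{BHR11} does supply the homogeneity of small spheres you ask about: strictly convex small balls are stably starlike, so $X$ is locally G-homogeneous.

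The genuine gap is earlier than the step you flag: nothing lets you put a manifold point on the \emph{prescribed} sphere $S(p,r)$. In the proof of Theorem \ref{thm:main}, the index set $I$ comes out of a pigeonhole/Baire argument over the finitely many closed sets $H_I$ covering $\bar B$. Even if you insert $p$ into the net, you have no control over whether its index lands in the selected $I$, or whether $U=\In H_I$ meets $S(p,r)$. The natural way to force $p$ in is to add $\bar B(p,r)$ as one more convex set in the Helly step. That costs a coordinate: you still get $|I|\le n+1$ in addition to $p$, hence only an embedding of an open piece of $S(p,r)$ into $\mathbb R^n$, which tells you nothing. Your alternative of repeating the argument at every point of the sphere fails for the same reason. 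What the construction genuinely gives is a manifold point on the sphere centered at a selected reference point $p_{i_1}$, $i_1\in I\setminus\{i_0\}$, through some $x\in U$. The center is not yours to choose.

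So you need a result that carries the conclusion from one small sphere to all small spheres about all centers. The one you list, radial projection between spheres of nearby radii about the same center, does not do this. The paper argues as follows:
\begin{itemize}
\item Take $S$ to be the sphere about $p_{i_1}$ through $x$. It is small, since $x$ and all $p_i$ lie in $\bar B(o,10r)$ and $r$ can be taken arbitrarily small.
\item $S$ has a manifold point by exactly your level-set argument.
\item By \cite[Theorem 1.1]{BHR11}, $S$ is homogeneous, hence a closed $(n-1)$-manifold.
\item $S$ is a homotopy sphere, so $S\cong S^{n-1}$ by the Poincar\'e conjecture.
\item Finally, \cite[Theorem 5.6]{BHR11} says that all sufficiently small metric spheres in $X$ are homeomorphic, independently of the center. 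This gives the statement for every $p$.
\end{itemize}
Without this last ingredient, or a genuinely new argument forcing a manifold point onto $S(p,r)$ for arbitrary $p$, your plan proves the corollary only for the uncontrolled centers $p_i$.
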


The above corollary immediately implies the following global result.
Recall that a G-space is \emph{straight} if every shortest path is extendable to a line.

\begin{cor}\label{cor:glo}
Let $X$ be a straight G-space such that every metric ball is convex.
Then $X$ is homeomorphic to Euclidean space.
\end{cor}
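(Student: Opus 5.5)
The plan is to reduce the global statement to the local one in Corollary \ref{cor:sph} by using straightness and convexity of \emph{all} balls to show that every metric sphere is homeomorphic to a sphere, and then to identify $X$ with the open cone over such a sphere.

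Fix $p\in X$. Since $X$ is a straight G-space, every shortest path from $p$ extends to a line, and by local uniqueness of extension in a G-space this extension is unique. Let $S(r)=\{x: d(p,x)=r\}$ denote the metric sphere of radius $r$. For each $x\neq p$ there is a unique ray $\gamma_x$ from $p$ through $x$; uniqueness of the shortest path $px$ should also follow, from straightness together with convexity of balls (two distinct shortest paths would violate uniqueness of extension, as in Busemann's treatment of straight spaces). This gives a radial projection
\[
\pi_{r,s}\colon S(r)\to S(s),\qquad x\mapsto \gamma_x(s),
\]
for $0<r,s$. The maps $\pi_{r,s}$ are mutually inverse bijections. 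They are continuous because, by local compactness and uniqueness of extension, shortest paths and their extensions depend continuously on endpoints. Hence all spheres $S(r)$ are homeomorphic to one another. By Corollary \ref{cor:sph}, $S(r)$ is homeomorphic to $S^{n-1}$ for small $r$, so every $S(r)$ is a topological $(n-1)$-sphere.

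Next, define
\[
\Phi\colon S(1)\times[0,\infty)\to X,\qquad (\xi,t)\mapsto \gamma_\xi(t).
\]
This map is surjective, since every point lies on a ray from $p$. It is injective away from $t=0$, because rays from $p$ cannot branch or meet again: shortest paths are unique and extensions are unique. It is continuous by the continuity of extension noted above, and it is proper because $d(p,\Phi(\xi,t))=t$. Consequently $\Phi$ descends to a homeomorphism from the open cone over $S(1)\cong S^{n-1}$ onto $X$, and that cone is $\mathbb R^n$.

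The main obstacle is showing that rays from $p$ never rejoin. Concretely, one must show that $d(p,\gamma_\xi(t))=t$ for all $t$, i.e.\ that the extension of a shortest path from $p$ stays a shortest path from $p$ forever. This is exactly what straightness gives, since the extension is a line. Convexity of every ball should then yield uniqueness of shortest paths between points: suppose two shortest paths joined $p$ to $x$. Extending one of them beyond $x$ and using convexity of $B(p,d(p,x))$ on the other should produce a contradiction with the extension being a line. If that argument fails, I would instead invoke the standard fact that in a straight G-space with convex balls, geodesics are unique (a Busemann-type result), and then carry out only the cone construction above.
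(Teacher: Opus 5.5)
Your proposal is correct and follows essentially the paper's route. The paper obtains $\bar B(o,r)\cong$ closed ball from Corollary \ref{cor:sph} plus the local cone structure, then radially stretches $B(o,r)$ onto $X$ along the extensions guaranteed by straightness; this is the same cone-over-a-small-sphere argument as your map $\Phi$. One small correction: uniqueness of shortest paths does not use convexity of balls. In a straight G-space it follows from non-branching alone, exactly as in your parenthetical remark, so convexity enters only through Corollary \ref{cor:sph}.
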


As the proof shows, the assumption that all metric balls are convex is superfluous here (local convexity as in Theorem \ref{thm:main} is enough).
The reason we formulate it this way is that it can be viewed as a generalization of the Cartan--Hadamard theorem, and also because many equivalent conditions are known under the assumption of straightness; see \cite[p.121, Theorem 20.9]{Bu55}.

\begin{rem}
Theorem \ref{thm:main}, as well as Corollaries \ref{cor:sph} and \ref{cor:glo}, generalize the corresponding results for G-spaces with nonpositive curvature in the sense of Busemann.
These results were originally due to Andreev \cite{An14,An17nor}, and an alternative proof was recently obtained by the authors \cite[Theorems 1.6 and 7.13]{FGtop}.
\end{rem}

\begin{rem}
In \cite[Theorem 1.5]{FGfin}, the authors also proved that any G-space such that the squared distance function to a point is locally semiconvex is a topological manifold.
This is another generalization of the case of nonpositive curvature mentioned above.
However, in general, neither of the assumptions in Theorem 1.1 and \cite[Theorem 1.5]{FGfin} implies the other (at least not immediately). Nevertheless, common techniques appear in their proofs; a Berestovskii-type argument concerning finite-dimensionality, as well as a Baire category argument, had already appeared in the proof of \cite[Theorem 1.5]{FGfin}.
It is worth emphasizing that these observations indirectly led to the present results.
\end{rem}

\begin{org}
In Section \ref{sec:pre}, we briefly discuss the definition and properties of a G-space, convexity of its metric balls, and Ivanov's Helly theorem \cite{Iv14}.
In Section \ref{sec:prf}, we prove Theorem \ref{thm:main} and Corollaries \ref{cor:sph} and \ref{cor:glo}.
In Appendix \ref{sec:app}, we give a counterexample to Question 9.1 of Berestovskii--Halverson--Repov\v s \cite{BHR11}.
\end{org}

\begin{ack}
The authors would like to thank Valerii Berestovskii for his interest in this work and for valuable comments.
The first author was supported by JSPS KAKENHI Grant Number 25K23336.
The second author was supported by NSFC grant 12201102.
The authors used artificial intelligence during the exploratory stage of this research. A detailed statement on the use of AI is provided at the end of the paper.
\end{ack}

\section{Preliminaries}\label{sec:pre}

The distance between $x$ and $y$ is denoted by $d(x,y)$.
A shortest path (see below) between $x$ and $y$ is denoted by $xy$.
The open and closed $r$-ball around $p$ are denoted by $B(p,r)$ and $\bar B(p,r)$, respectively.
The interior and boundary of a subset $A$ are denoted by $\In A$ and $\partial A$, respectively.

\subsection{G-spaces}\label{sec:g}

We first recall the definition and properties of a G-space.
Although our definition differs from the original axiomatic definition in \cite{Bu55}, they are known to be equivalent.
See also \cite{Th96,BHR11} for basic properties.

A \emph{shortest path} is an isometric embedding of an interval into a metric space.
A metric space is a \emph{geodesic space} if any two points are joined by a shortest path.
A geodesic space is \emph{uniquely geodesic} if any two points are joined by a unique shortest path.
Similarly, a geodesic space is \emph{locally uniquely geodesic} if every point has a neighborhood $U$ such that any two points in $U$ are joined by a unique shortest path (possibly outside $U$).

\begin{dfn}\label{dfn:g}
A \emph{G-space} is a complete, locally compact geodesic space with a local unique extension property.
\end{dfn}

The last condition means that every point has a neighborhood $U$ such that any shortest path in $U$ admits a unique maximal extension reaching $\partial U$.
In particular, this implies the local geodesic uniqueness mentioned above.
Furthermore, by the Arzel\`a--Ascoli theorem, such locally unique shortest paths vary continuously depending on their endpoints.
Note also that any G-space is \emph{proper}, i.e., every bounded closed subset is compact.

The following topological properties will be used later.
Every G-space is \emph{topologically homogeneous}, i.e.,  any point can be moved to any other point by a self-homeomorphism of the space (\cite[Theorem 2.5]{Th96}, \cite[Corollary 3.12]{BHR11}).
Moreover, every finite-dimensional G-space is an ANR integral homology manifold (\cite[Corollary 2.8, Theorem 2.12]{Th96}, \cite[Proposition 3.13, Theorem 3.16]{BHR11}).
Since the latter is used solely to apply the invariance of domain, its definition is omitted here.

\subsection{Convex balls}\label{sec:conv}

Next we discuss convexity assumptions for metric balls in a G-space.
See \cite[Section 20]{Bu55} for basic properties and \cite{BP79,BP80,BP83} for related works.
See also \cite{Fo04} for several convexity notions in a more general setting.

Let $X$ be a geodesic space.
A subset $A$ of $X$ is \emph{convex} if any shortest path $\gamma$ connecting two points of $A$ is contained in $A$.
Furthermore, $A$ is \emph{strictly convex} if the interior of $\gamma$ is contained in the interior of $A$.
In the setting of G-spaces, we only consider these conditions in a small neighborhood in which shortest paths are unique.

Recall that in Theorem \ref{thm:main}, we assumed the convexity of metric balls non-strictly and pointwise: the assumption there means that for any $p\in X$, there exists $r>0$ such that every closed ball around $p$ of radius $\le r$ is convex.
The following simple lemma improves it to a strong and uniform form.
This plays a fundamental role throughout the proof of Theorem \ref{thm:main}.

\begin{lem}\label{lem:conv}
Let $X$ be a G-space with convex balls as in Theorem \ref{thm:main}.
Then there exist a nonempty open subset $O\subset X$ and $R>0$ such that for any $p\in O$, any closed ball around $p$ of radius $\le R$ is strictly convex.
\end{lem}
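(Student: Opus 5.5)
The plan has two separate steps. First, a Baire category argument makes the radius uniform on an open set. Second, a perturbation of the center upgrades convexity to strict convexity.

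\emph{Step 1 (uniformity).} Fix a compact neighborhood $K$ of some point on which the local unique extension property holds at a fixed scale $\rho>0$. This means shortest paths between points at distance $<\rho$ are unique, vary continuously with their endpoints, and extend uniquely to length $\rho$. For $k\in\mathbb N$ with $1/k<\rho/4$, let $X_k$ be the set of $p\in K$ such that every closed ball $\bar B(p,s)$ with $s<1/k$ is convex. By hypothesis $K=\bigcup_k X_k$.

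I claim each $X_k$ is closed. Let $p_j\to p$ with $p_j\in X_k$, fix $s<1/k$, and take $x,y\in\bar B(p,s)$. Choose $\varepsilon>0$ with $s+2\varepsilon<1/k$. For large $j$ we have $x,y\in\bar B(p_j,s+\varepsilon)$, so by convexity $xy\subset\bar B(p_j,s+\varepsilon)\subset\bar B(p,s+2\varepsilon)$. Letting $\varepsilon\to0$ gives $xy\subset\bar B(p,s)$; the shortest path $xy$ is unique here, so no limiting of paths is even needed.

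Since $K$ is a complete metric space, the Baire category theorem yields some $X_k$ with nonempty interior in $K$. I take $O$ open inside it and choose $R<1/(2k)$.

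\emph{Step 2 (strictness).} Let $p\in O$, $s\le R$, and let $x,y\in\bar B(p,s)$ with $\gamma=xy$. First I note that $\In\bar B(p,s)=B(p,s)$. Indeed, if $d(p,z)=s$, then extending $pz$ beyond $z$ produces points at distance $>s$ arbitrarily close to $z$. So it suffices to show that no interior point $z$ of $\gamma$ satisfies $d(p,z)=s$.

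Suppose such a $z$ exists. Extend the shortest path $zp$ beyond $p$ by a small length $\delta>0$ to a point $p'$. Take $\delta$ small enough that $p'\in O$ and $s+\delta<1/k$. Then $d(p',z)=s+\delta$, while the triangle inequality gives $d(p',x)\le\delta+s$ and $d(p',y)\le\delta+s$.

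The key point is the equality case. If $d(p',x)=\delta+s$, then the concatenation $p'p\cup px$ is a shortest path. Hence both $px$ and $pz$ are extensions of $p'p$ of length $s$. By local unique extension this forces $x=z$, which is impossible since $z$ is an interior point of $\gamma$. The same argument applies to $y$.

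Hence $x,y\in\bar B(p',s+\delta-\eta)$ for some $\eta>0$. Convexity of this ball around $p'\in O$ (its radius is $<1/k$) gives $z\in\gamma\subset\bar B(p',s+\delta-\eta)$. This contradicts $d(p',z)=s+\delta$.

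I expect the main obstacle to be the bookkeeping of scales. All shortest paths involved, namely $xy$, $pz$ extended to $p'$, and $px$, must stay within the region where extension is unique, and $p'$ must remain in $O$. This is why $R$ is taken well below both $1/k$ and $\rho$, and why $O$ may need to be shrunk slightly around a point so that $p'$ stays inside it.
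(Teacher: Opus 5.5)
Your proof is correct. Step 1 is essentially the paper's Baire-category argument, with one difference in the closedness proof. You define $X_k$ with the open range $s<1/k$, which leaves slack in the radius, so you get $xy\subset\bar B(p_j,s+\varepsilon)\subset\bar B(p,s+2\varepsilon)$ directly. The paper uses $r\le 1/j$, so it must move $x,y$ toward $p_k$ and pass to the limit of the shortest paths $x_ky_k$ using continuous dependence on endpoints.

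One minor bookkeeping point: Baire on a compact $K$ only gives interior relative to $K$. To get $O$ open in $X$, either take $K$ to be a closed ball, which in a geodesic space is the closure of its open ball, or apply Baire to an open ball as the paper does.

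The genuine difference is Step 2. The paper does not prove strictness; it cites Busemann's (20.5), possibly shrinking $O$ and $R$. You instead give a self-contained argument:
\begin{itemize}
\item push the center back along the extension of $zp$ to a point $p'$;
\item use uniqueness of extension of $p'p$ to see that $x,y$ lie strictly inside $\bar B(p',d(p',z))$;
\item contradict the convexity of a slightly smaller ball about $p'$.
\end{itemize}

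What your version buys is transparency. It shows that strictness needs exactly two things: convexity for centers near $p$ at radii slightly above $s$, which is why the uniformity from Step 1 must come first, and non-branching of geodesics. This matches the paper's closing remark that uniqueness of extension enters this lemma only in the upgrade to strict convexity. It also shows that, contrary to your last paragraph, $O$ need not be shrunk further, since $\delta$ may depend on $p$.
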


\begin{proof}
We prove the claim with strict convexity replaced by non-strict convexity.
Then the strict version follows from \cite[p.119, (20.5)]{Bu55} (by possibly replacing $O$ and $R$ with smaller ones).

Choose an open ball $B\subset X$ and $a>0$ such that any shortest path connecting two points in the $a$-neighborhood of $B$ is unique (in $X$).
For any integer $j\ge 1/a$, put
\[F_j:=\{p\in B\mid\text{$\bar B(p,r)$ is convex for every $0<r\le1/j$}\}.\]
In particular, any shortest path connecting two points in $\bar B(p,r)$ is unique.
By the pointwise convexity assumption, the sets $F_j$ cover $B$.

We show that $F_j$ is a closed subset of $B$.
Suppose $p_k\in F_j$ converges to $p\in B$ and let $x,y\in\bar B(p,r)$ with $0<r\le 1/j$. Moving $x,y$ slightly toward $p_k$ along shortest paths, we can find $x_k,y_k\in\bar B(p_k,r)$ that converge to $x,y$, respectively. By the definition of $F_j$, the unique shortest path $x_ky_k$ is contained in $\bar B(p_k,r)$. Since $x_ky_k$ converges to the unique shortest path $xy$, this shows that $\bar B(p,r)$ is convex, as desired.

Since $B$ is a locally compact metric space, by the Baire category theorem, some $F_j$ must have nonempty interior in $B$ (and hence in $X$).
Setting $O:=\In F_j$ and $R:=1/j$ proves the claim.
\end{proof}

\begin{rem}
For the proof of Theorem \ref{thm:main}, we only need the conclusion of Lemma \ref{lem:conv}.
For the proof of Lemma \ref{lem:conv}, we only need the pointwise convexity assumption on some nonempty open subset.
Therefore, the assumption of Theorem \ref{thm:main} can be relaxed to the following form: there exists a nonempty open subset $O'\subset X$ such that for any $p\in O'$, there exists $r>0$ such that every closed ball around $p$ of radius $\le r$ is convex.
\end{rem}

\begin{rem}
Berestovskii's original proof of finite-dimensionality \cite{Be77} assumes the conclusion of Lemma \ref{lem:conv}.
Thus Lemma \ref{lem:conv} together with the previous remark slightly improves it.
It is worth emphasizing that a similar Baire-category argument will be used again in the proof of Theorem \ref{thm:main} (see Section \ref{sec:mani}).
\end{rem}

\subsection{Ivanov's Helly theorem}

Finally, we recall the Helly-type theorem for geodesic spaces proved by Ivanov \cite{Iv14}.
This is the key ingredient in the proof of Theorem \ref{thm:main}.

\begin{thm}\label{thm:hel}
Let $X$ be a proper, uniquely geodesic space of compact topological dimension (at most) $n<\infty$.
Let $\{A_i\}$ be a finite collection of convex subsets of $X$ such that every subcollection of cardinality $\le n+1$ has nonempty intersection.
Then $\bigcap A_i\neq\emptyset$.
\end{thm}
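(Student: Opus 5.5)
The plan is to follow Ivanov's strategy: induct on the number $m$ of sets, and carry out the inductive step by a topological nerve (Borsuk-type) argument adapted to geodesic spaces. Unique geodesics make every convex subset contractible via geodesic contraction toward a point, and the dimension bound keeps the relevant nerves low-dimensional.

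\textbf{Reduction to compact sets.} First I will reduce to the case where the $A_i$ are compact. For each subcollection $J$ with $|J|\le n+1$, pick a point in $\bigcap_{i\in J}A_i$. Let $K_i$ be the convex hull, inside $A_i$, of the finitely many chosen points lying in $A_i$. This hull is obtained by iterating geodesic joins and taking closure. Properness, together with continuity of geodesics in a uniquely geodesic proper space, should keep $K_i$ compact, convex and inside $A_i$ (after closure, using closedness of $A_i$). So each $K_i$ is compact and convex, and the $(n+1)$-wise intersections of the $K_i$ are still nonempty. It therefore suffices to treat compact sets in a compact ball $Y$, which has topological dimension $\le n$.

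\textbf{Induction.} Suppose $m\ge n+2$ and every $m-1$ of the sets intersect, but $\bigcap_{i=1}^m A_i=\emptyset$. Consider the continuous map $\Phi\colon Y\to\mathbb R^m$ given by $\Phi(x)=(d(x,A_i))_i$; its values never lie on the diagonal ray where all coordinates vanish. For each $j$, choose $x_j\in\bigcap_{i\neq j}A_i$. I then build a map $f\colon\partial\Delta^{m-1}\to Y$ as follows. Vertices go to the $x_j$. Higher faces are filled by iterated geodesic combinations: define $f$ on a face $\sigma$ by geodesic interpolation between $f$ on a facet of $\sigma$ and the remaining vertex, taking care with continuity. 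Convexity gives that $f(\sigma)\subset A_i$ whenever $i$ is not a vertex of $\sigma$ (the face opposite the vertex $i$ lies in $A_i$).

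Next, compose with the ``nerve'' map $g\colon Y\to\partial\Delta^{m-1}$ given by normalised distances, $g(x)=\bigl(d(x,A_i)/\sum_k d(x,A_k)\bigr)_i$, which is well defined because the intersection is empty. Check that $g\circ f$ maps each face into the closed star of that face, so it is homotopic to the identity of $\partial\Delta^{m-1}\cong S^{m-2}$. Hence $f$ is homologically nontrivial in degree $m-2\ge n$.

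\textbf{Main obstacle.} The crux is the contradiction with $\dim Y\le n$. Since $Y$ is compact, uniquely geodesic and convex-contractible, $f$ extends over the disk $\Delta^{m-1}$ via geodesic coning to a point. That alone does not contradict nontriviality of $g\circ f$, because $g$ is only defined off $\bigcap A_i$. The real argument must be that the cone image avoids $\bigcap A_i=\emptyset$ only dimensionally. Concretely, I will show that for $m-2\ge n$, the map $g$ restricted to the $n$-dimensional compactum $f(\partial\Delta)\cup\text{cone}$ factors, up to homotopy, through an $n$-dimensional polyhedron (nerve of a fine cover, by dimension theory). The class of $g\circ f$ then dies when $m-2>n$. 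The borderline case $m-2=n$ needs separate care, since $m=n+2$ is exactly the Helly number. There I expect to use the extension $F\colon\Delta^{m-1}\to Y$ and the fact that $g\circ F$ would contract $S^{n}$ inside an $n$-dimensional image, via a degree and \v{C}ech cohomology argument. I expect this step to be the hardest and would try to adapt Ivanov's argument directly there.
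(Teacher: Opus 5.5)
Your overall architecture is the right kind of argument: induct on $m$, map $\Delta^{m-1}$ into $X$ by iterated geodesic combinations so that the facet opposite vertex $i$ lands in $A_i$, then compare with a distance-defined map to the simplex. There are, however, two genuine gaps.

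\textbf{First gap: the reduction to compact sets.} It relies on the $A_i$ being closed (``after closure, using closedness of $A_i$''), which the statement does not assume. In the paper's only application, Lemma \ref{lem:hel}, the sets $A_i=\bar B\cap(\bar B(p_i,d(p_i,x))\setminus\{x\})$ are deliberately not closed. Compactness of the convex hull is also not justified. The hull is an increasing union of compact iterated geodesic joins, and without convex balls nothing keeps it bounded. Likewise, a ball $Y$ need not be convex, so it need not be a uniquely geodesic space in its own right.

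The same defect breaks $g$. The condition $d(x,A_i)=0$ only says $x\in\bar A_i$, so $\bigcap A_i=\emptyset$ does not make $\sum_k d(x,A_k)$ positive.

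The reduction is unnecessary. Extend your iterated-geodesic map to $F\colon\Delta^{m-1}\to X$; it is continuous because geodesics depend continuously on their endpoints, which is what properness is for. Let $Z:=F(\Delta^{m-1})$, which is compact and hence has $\dim Z\le n$. Let $C_i:=F(\sigma_i)$, where $\sigma_i$ is the facet opposite vertex $i$. The $C_i$ are compact and $C_i\subset A_i$, so it suffices to prove $\bigcap_i C_i\neq\emptyset$. Accordingly, $g$ should be built from $d(\cdot,C_i)$ rather than $d(\cdot,A_i)$.

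\textbf{Second gap: the decisive step is missing, and its setup is off.} The map $g$ does not take values in $\partial\Delta^{m-1}$: a point of $Z$ lying in no $C_i$ goes to the open simplex. So $g\circ F\colon\Delta^{m-1}\to\Delta^{m-1}$ contradicts nothing. Also, $f=F|_{\partial\Delta^{m-1}}$ is null-homotopic in $Z$ rather than ``homologically nontrivial''.

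What is true is that $g$ sends the closed set $F(\partial\Delta^{m-1})=\bigcup_i C_i$ into $\partial\Delta^{m-1}\cong S^{m-2}$. Moreover, $g\circ f$ maps every face into itself, so it is homotopic to the identity.

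The missing idea is the extension characterization of covering dimension: if $\dim Z\le k$, every map from a closed subset of $Z$ to $S^k$ extends over $Z$. Take $k=m-2\ge n$ and extend $g|_{\bigcup_i C_i}$ to $G\colon Z\to S^{m-2}$. Then $G\circ F$ extends the degree-one map $g\circ f$ over $\Delta^{m-1}$, a contradiction.

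This handles every $m\ge n+2$ at once. In particular, the case $m=n+2$ needs no separate treatment. You leave it open, yet it is exactly the case that carries the theorem, since the general case reduces to it by replacing $A_{m-1},A_m$ with $A_{m-1}\cap A_m$. The factorization through an $n$-dimensional nerve is also not needed.

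(The paper itself does not reprove the statement; it quotes it from \cite{Iv14}, noting a topological reformulation in \cite[Proposition 2.2]{Iv14}.)
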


Here the \emph{compact topological dimension} of $X$ is the supremum of the topological dimension of all compact subsets of $X$.
We emphasize that the precise bound $n+1$ is essential for our application.

\begin{rem}
The original statement in \cite[Theorem 1.1]{Iv14} does not include the properness assumption.
The reason we include it here is that Ivanov assumed the continuous dependence of shortest paths on their endpoints in his definition of a uniquely geodesic space, which can be guaranteed by properness as he mentioned in \cite[p.110]{Iv14}.
\end{rem}

\begin{rem}
Theorem \ref{thm:hel} has a more topological formulation; see \cite[Proposition 2.2]{Iv14}.
The uniqueness of geodesics and the convexity of subsets are geometric assumptions that guarantee the topological assumptions there.
\end{rem}

\section{Proof}\label{sec:prf}

In this section, we prove Theorem \ref{thm:main} and Corollary \ref{cor:glo}.
In what follows, unless otherwise stated, $X$ denotes a G-space with convex balls as in Theorem \ref{thm:main}.

\subsection{Finite-dimensionality}

We first recall the proof of finite-dimensionality by Berestovskii \cite{Be77} (cf.\ \cite{BHR11}).
In fact, the construction used there is necessary to prove Theorem \ref{thm:main}.

Let $X$ be as in Theorem \ref{thm:main}.
By the local unique extension property (see Section \ref{sec:g}) and Lemma \ref{lem:conv}, we may assume that there exist $o\in X$ and $R>0$ satisfying the following:
\begin{itemize}
\item Every shortest path of length less than $R$
whose image lies in $\bar B(o,R)$ is uniquely extendable to length $R$.
\item Every closed ball contained in $\bar B(o,R)$ is strictly convex.
\end{itemize}
In what follows, we will always work within the $R$-neighborhood of $o$ and frequently use the above two properties.

Fix $0<r<R/100$ and consider a closed convex ball
\[\bar B:=\bar B(o,r).\]
Take a finite $r$-net $\{p_i\}_{i=1}^N$ in $\bar B(o,10r)$, i.e., a set of points whose $r$-neighborhoods cover $\bar B(o,10r)$.

\begin{lem}\label{lem:fin}
In the above situation, let $x,y\in\bar B$ be any pair of distinct points.
Then there exists $1\le i\le N$ such that
\begin{equation}\label{eq:fin}
d(p_i,y)>d(p_i,x).
\end{equation}
\end{lem}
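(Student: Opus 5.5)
The plan is to extend the shortest path from $y$ through $x$ beyond $x$, and to use the strict convexity of balls centered near the far end of this extension. By the local unique extension property, the shortest path $yx$ (of length $d(x,y)\le 2r<R$, lying in $\bar B(o,R)$) extends uniquely to a shortest path $\gamma$ starting at $y$, passing through $x$, and of length $R$. Let $q$ be the point on $\gamma$ at distance $5r$ from $y$, say; more precisely, take $q=\gamma(d(x,y)+t)$ with $t$ chosen so that $q\in\bar B(o,10r)$ and $d(q,x)$ is of order several $r$. For instance, $t=4r$ gives $d(q,x)=4r$ and $d(q,y)=d(x,y)+4r$, since $\gamma$ is a shortest path. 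Also $d(o,q)\le r+4r=5r$, so $q\in\bar B(o,10r)$ and has room around it.

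The key inequality will come from the gap $d(q,y)-d(q,x)=d(x,y)>0$, but this is only a gap of size $d(x,y)$, which may be much smaller than the net scale $r$. So we cannot simply pick $p_i$ within $r$ of $q$ and use the triangle inequality. Instead, we will use strict convexity to amplify. Consider the closed ball $\bar B(q,4r)$, which contains $x$ on its boundary. Since $x$ lies in the interior of the shortest path from $y$ to $q$, and $y$ lies outside the ball, the point $x$ is the first point of $\gamma$ (from $y$) entering this ball. I will then argue as follows: suppose for contradiction that $d(p_i,y)\le d(p_i,x)$ for all $i$. Then $y\in\bar B(p_i,d(p_i,x))$ for every $i$. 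Choose $p_i$ with $d(p_i,q)\le r$. We want to contradict this by showing that the ball $\bar B(p_i,d(p_i,x))$ is "almost" $\bar B(q,4r)$ near $x$. The ball alone is too rough for that, so I will replace the single comparison by a convexity argument on the segment: by strict convexity of $\bar B(p_i,d(p_i,x))$, which contains both $x$ and $y$, the open segment $xy$ lies in its interior. Extending slightly beyond $x$ toward $q$ then keeps us in the ball.

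A cleaner route is to iterate along the net. Define $f(z)=d(z,y)-d(z,x)$. We have $f(q)=d(x,y)>0$ and $f$ is $2$-Lipschitz, so $f>0$ on $B(q,d(x,y)/2)$, but the net may miss this small ball. To overcome this, I will use the hypothesis $f(p_i)\le 0$ for all $i$ together with convexity: the set $\{z: d(z,y)\le d(z,x)\}$ restricted to the net scale. The step I expect to be hardest is exactly this scale mismatch. What I would try first is a contradiction via the ball $\bar B(p_i, d(p_i,x))$ for $p_i$ near $q$: it contains $y$ by assumption and $x$ on its boundary, so by strict convexity a point $x'$ of $\gamma$ slightly past $x$ toward $y$ lies in its interior. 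Then, by the unique extension and the geometry of the ball around $p_i$, the point of $\gamma$ beyond $x$ toward $q$ would also have to be in the ball, and the extension of the ray from $y$ would never exit the ball before reaching distance $R$. This contradicts the ball having radius $\le 5r<R$. Concretely, I expect to use the fact that a geodesic entering a strictly convex ball through an interior point must exit it, combined with the fact that the ray from $y$ through $x$ passes near the center $p_i$. Making this exit argument rigorous, using only strict convexity and extendability, is the main obstacle.
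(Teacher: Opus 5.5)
\textbf{There is a genuine gap.} Your setup is the right one: extend $yx$ beyond $x$, take a net point near the far end, and use strict convexity. But you never close the argument. The decisive step is left as an ``exit argument'' that you yourself flag as not rigorous, and the mechanism you sketch does not work. Nothing forces the extension of $\gamma$ past $x$ to stay inside $\bar B(p_i,d(p_i,x))$ up to length $R$. A geodesic through a strictly convex ball certainly exits it, so no contradiction can come from ``never exiting.''

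The ``scale mismatch'' you worry about is also not a real obstruction. The triangle inequality is never needed to detect the small gap $d(q,y)-d(q,x)=d(x,y)$. Strict convexity supplies all the amplification, however small $d(x,y)$ is.

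The missing observation is that the triangle inequality only has to place the far point $q$ inside the relevant ball. This is easy, because $p_i$ is close to $q$ while $y$ is far from it. With your choice $d(x,q)=4r$ and $d(p_i,q)\le r$, you get
$d(p_i,y)\ge d(q,y)-r\ge 3r\ge d(p_i,q)$, so $q\in\bar B(p_i,d(p_i,y))$. Now:
\begin{itemize}
\item This ball lies in $\bar B(o,R)$ since $r<R/100$, so it is strictly convex.
\item It contains both endpoints $y$ and $q$ of the shortest path $yq$.
\item The point $x$ is an interior point of $yq$.
\end{itemize}
Hence $x$ lies in the interior of the ball, i.e.\ $d(p_i,x)<d(p_i,y)$.

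Your contradiction form closes the same way. If $d(p_i,y)\le d(p_i,x)$, then both $y$ and $q$ lie in $\bar B(p_i,d(p_i,x))$, because $d(p_i,q)\le r<3r\le d(p_i,x)$. Strict convexity would then put $x$ strictly inside a ball on whose boundary it lies. This is exactly the paper's proof, which uses the endpoint $z$ at distance $9r$ from $y$ in place of your $q$.
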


\begin{proof}
Extend the shortest path $yx$ beyond $x$ to length $9r$ and let $z\in\bar B(o,10r)$ be the new endpoint.
Choose $p_i$ that is $r$-close to $z$.
By the triangle inequality, we have
\[d(p_i,z)<d(p_i,y).\]
In particular, this implies that $z$ is contained in $\bar B(p_i,d(p_i,y))$.
Since this ball is strictly convex and $x$ lies in the interior of the shortest path $yz$, we obtain the desired inequality \eqref{eq:fin}.
\end{proof}

\begin{rem}\label{rem:fin}
The above lemma implies that the distance map
\[F:=(d(p_i,\cdot))_{i=1}^N:\bar B\to\mathbb R^N\]
is a topological embedding.
Together with the topological homogeneity, this shows the finite-dimensionality of $X$.
Indeed, this was the original proof by Berestovskii \cite{Be77}.
For this argument, we only need $d(p_i,y)\neq d(p_i,x)$ rather than the oriented inequality \eqref{eq:fin}.
On the other hand, this oriented inequality plays an important role in the proof of manifoldness below.
\end{rem}

\subsection{Manifoldness}\label{sec:mani}

We proceed with the proof of Theorem \ref{thm:main}.
We will prove several key lemmas.

For any nonempty $I\subset\{1,\dots,N\}$ and $x\in\bar B$, put
\[C_I(x):=\bar B\cap\bigcap_{i\in I}\bar B(p_i,d(p_i,x)).\]
In other words, $C_I(x)$ is the set of points in $\bar B$ whose distance from $p_i$ ($i\in I$) is less than or equal to that of $x$.
Clearly $x\in C_I(x)$.
Lemma \ref{lem:fin} shows that $C_I(x)=\{x\}$ for the full index set $I=\{1,\dots, N\}$.

The following lemma is key to the proof of Theorem \ref{thm:main}; this is the only place where we use Ivanov's Helly theorem \ref{thm:hel}.
Let $n$ be the topological dimension of $X$, which is finite by Remark \ref{rem:fin}.

\begin{lem}\label{lem:hel}
For any $x\in\bar B$, there exists a nonempty subset $I\subset\{1,\dots,N\}$ with $|I|\le n+1$ such that
\[C_I(x)=\{x\}.\]
\end{lem}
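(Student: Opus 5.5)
We argue by contradiction and apply Ivanov's Helly theorem \ref{thm:hel} inside the convex ball $\bar B$. Equality $C_I(x)=\{x\}$ is not directly a nonempty-intersection statement, so we pass to strict sublevel sets, where the desired conclusion becomes an empty intersection. Fix $x\in\bar B$ and, for each $1\le i\le N$, put
\[D_i:=\{y\in\bar B\mid d(p_i,y)<d(p_i,x)\}.\]
Suppose, for contradiction, that $C_I(x)\neq\{x\}$ for every nonempty $I$ with $|I|\le n+1$.

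\emph{Step 1: each subcollection of at most $n+1$ of the $D_i$ meets.} Let $|I|\le n+1$ and pick $y\in C_I(x)$ with $y\neq x$. Let $m$ be the midpoint of the unique shortest path $xy$.
\begin{itemize}
\item By convexity of $\bar B$, we have $m\in\bar B$.
\item For each $i\in I$, both $x$ and $y$ lie in $\bar B(p_i,d(p_i,x))$, and this closed ball lies in $\bar B(o,R)$, hence is strictly convex. So the interior point $m$ of $xy$ lies in the interior of that ball, giving $d(p_i,m)<d(p_i,x)$.
\end{itemize}
Therefore $m\in\bigcap_{i\in I}D_i$.

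\emph{Step 2: verify the hypotheses of Theorem \ref{thm:hel} on $\bar B$ with the induced metric.}
\begin{itemize}
\item Shortest paths in $X$ between points of $\bar B$ are unique and, by convexity, stay in $\bar B$. Hence $\bar B$ is a geodesic space, uniquely geodesic, whose geodesics are those of $X$.
\item $\bar B$ is compact, hence proper.
\item $\bar B$ has compact topological dimension at most $n$, since it is a compact subset of the $n$-dimensional space $X$.
\item Each $D_i$ is convex. Indeed, $D_i$ is the increasing union over $s<d(p_i,x)$ of the convex sets $\bar B\cap\bar B(p_i,s)$. Any two points of $D_i$ lie in a common such set, so the geodesic between them does too.
\end{itemize}
One point to check is whether Ivanov's theorem allows non-closed convex sets. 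The statement as quoted only requires convexity. If closedness were needed, I would instead replace $D_i$ by $\bar B\cap\bar B(p_i,d(p_i,x)-\varepsilon)$ for a small $\varepsilon>0$. This works because there are only finitely many index sets $I$, so the points $m$ from Step 1 satisfy the strict inequalities with a uniform margin.

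\emph{Step 3: contradiction.} Theorem \ref{thm:hel} now yields a point $z\in\bigcap_{i=1}^N D_i$. Thus $d(p_i,z)<d(p_i,x)$ for all $i$, and in particular $z\neq x$. But Lemma \ref{lem:fin}, applied to the distinct pair $x,z\in\bar B$, gives an index $i$ with $d(p_i,z)>d(p_i,x)$. This contradicts the previous sentence, so some $I$ with $|I|\le n+1$ has $C_I(x)=\{x\}$.

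The only delicate point is Step 1, namely turning $C_I(x)\neq\{x\}$ into a common point with all inequalities strict. Strict convexity of the balls $\bar B(p_i,d(p_i,x))$ is exactly what makes this work. The oriented inequality \eqref{eq:fin} is what gives the final contradiction.
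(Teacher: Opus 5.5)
Your proof is correct and is essentially the paper's argument: apply Ivanov's Helly theorem in the compact uniquely geodesic space $\bar B$ to convex sets cut out by the balls around the $p_i$, where the total intersection is empty by Lemma \ref{lem:fin}. The paper instead uses $A_i=\bar B\cap(\bar B(p_i,d(p_i,x))\setminus\{x\})$, which are convex because removing a boundary point of a strictly convex ball preserves convexity, and which satisfy $\bigcap_{i\in I}A_i=C_I(x)\setminus\{x\}$ directly; you use open sublevel sets and spend strict convexity in the midpoint step instead, as in Lemma \ref{lem:clo} (the paper's $A_i$ are not closed either, so your $\varepsilon$-fallback is unnecessary, though valid).
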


\begin{proof}
For each $1\le i\le N$, put
\[A_i:=\bar B\cap(\bar B(p_i,d(p_i,x))\setminus\{x\}).\]
Since $\bar B(p_i,d(p_i,x))$ is strictly convex, removing a boundary point preserves the convexity; in particular, $\bar B(p_i,d(p_i,x))\setminus\{x\}$ is convex.
Since $\bar B$ is also convex, it follows that $A_i$ is a convex subset.
Furthermore, by Lemma \ref{lem:fin}, the intersection of all $A_i$ is empty.
Note also $\dim\bar B\le n$ (in fact the equality holds by topological homogeneity).
Thus, applying the contrapositive of Ivanov's Helly theorem \ref{thm:hel} in the compact uniquely geodesic space $\bar B$ implies that there exists $I\subset\{1,\dots,N\}$ with $|I|\le n+1$ such that
\[\bigcap_{i\in I}A_i=\emptyset.\]
Since $\bigcap_{i\in I}A_i=C_I(x)\setminus\{x\}$, this shows the claim.
\end{proof}

For any nonempty $I\subset\{1,\dots,N\}$, put
\[H_I:=\{x\in\bar B\mid C_I(x)=\{x\}\}.\]
In other words, $H_I$ is the set of points in $\bar B$ for which the conclusion of Lemma \ref{lem:hel} holds for $I$.
Lemma \ref{lem:hel} shows that the sets $H_I$ with $|I|\le n+1$ cover $\bar B$; this fact will be used later.

\begin{lem}\label{lem:clo}
The subset $H_I$ is closed in $\bar B$.
\end{lem}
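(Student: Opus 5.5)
The plan is to prove that the complement of $H_I$ in $\bar B$ is open. Equivalently, I will show that a limit of points of $H_I$ stays in $H_I$, arguing by contradiction. So let $x_k\in H_I$ converge to some $x\in\bar B$, and suppose $x\notin H_I$. Then there is a point $y\in C_I(x)$ with $y\neq x$. The goal is to build, for all large $k$, a point of $C_I(x_k)$ different from $x_k$, which contradicts $x_k\in H_I$.

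\emph{Step 1: move to the midpoint.} The set $C_I(x)$ is an intersection of the convex sets $\bar B$ and $\bar B(p_i,d(p_i,x))$ for $i\in I$. Each of these lies in $\bar B(o,R)$, where shortest paths are unique. Both $x$ and $y$ belong to every one of these sets, so the unique shortest path $xy$ lies in each of them. Let $m$ be the midpoint of $xy$. Then $m\neq x$, because $y\neq x$.

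\emph{Step 2: use strict convexity.} Every closed ball in $\bar B(o,R)$ is strictly convex. Hence the interior point $m$ of $xy$ lies in the interior of each ball $\bar B(p_i,d(p_i,x))$ with $i\in I$. In a geodesic space the interior of a closed ball is contained in the corresponding open ball, so
\[
d(p_i,m)<d(p_i,x)\qquad(i\in I).
\]
Also, $m\in\bar B$, since $\bar B$ is convex and $x,y\in\bar B$.

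\emph{Step 3: pass to the approximating points.} The index set $I$ is finite and $d(p_i,x_k)\to d(p_i,x)$. Therefore, for all large $k$, the strict inequalities of Step 2 give $d(p_i,m)<d(p_i,x_k)$ for every $i\in I$. Since $m\in\bar B$, this means $m\in C_I(x_k)$. In addition, $x_k\to x\neq m$, so $m\neq x_k$ for large $k$. Thus $C_I(x_k)\neq\{x_k\}$, contradicting $x_k\in H_I$.

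The only delicate point is Step 2. It relies on strict rather than mere convexity, because that is what turns the weak inequalities defining $C_I(x)$ into strict ones; strict inequalities are exactly what survive the perturbation from $x$ to $x_k$. Strict convexity of the balls in $\bar B(o,R)$ is available from the standing assumptions, which were obtained from Lemma \ref{lem:conv}.
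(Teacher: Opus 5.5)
Your proof is correct and is essentially the paper's argument (the paper takes an arbitrary interior point $z$ of $xy$ where you take the midpoint). One small correction: the interior of a closed ball need not lie in the open ball in an arbitrary geodesic space (a dead end at distance exactly $d(p_i,x)$ from $p_i$ is a counterexample), but it does hold here because shortest paths from $p_i$ extend beyond their endpoints inside $\bar B(o,R)$; the paper also uses this fact implicitly.
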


\begin{proof}
Suppose $x_k\in H_I$ converges to $x\in\bar B$.
We show $x\in H_I$.
Suppose $x\notin H_I$; then there exists $y\in C_I(x)\setminus\{x\}$.
Let $z$ be any interior point of the shortest path $xy$.
By the convexity of $\bar B$, we have $z\in\bar B$.
Furthermore, by the assumption $y\in C_I(x)$ and the strict convexity of $\bar B(p_i,d(p_i,x))$, we have
\[d(p_i,z)<d(p_i,x)\]
for all $i\in I$.
Since $x_k$ converges to $x$, this implies that $z\in C_I(x_k)\setminus\{x_k\}$ for any sufficiently large $k$.
This contradicts $x_k\in H_I$.
\end{proof}

Finally, we introduce the distance coordinates on $H_I$.
For any nonempty $I\subset\{1,\dots,N\}$ and fixed $i_0\in I$, define
\[F_{I,i_0}:=(d(p_i,\cdot))_{i\in I\setminus\{i_0\}}:H_I\to\mathbb R^m,\]
where $|I|=m+1$.

\begin{lem}\label{lem:emb}
The map $F_{I,i_0}$ is a topological embedding.
\end{lem}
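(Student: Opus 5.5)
The map $F_{I,i_0}$ is clearly continuous, since each component $d(p_i,\cdot)$ is $1$-Lipschitz. By Lemma \ref{lem:clo}, $H_I$ is closed in the compact set $\bar B$, so $H_I$ is compact. A continuous injection from a compact space into the Hausdorff space $\mathbb R^m$ is a topological embedding. The whole proof therefore reduces to showing that $F_{I,i_0}$ is injective on $H_I$.

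For injectivity, I argue by contradiction. Suppose $x,y\in H_I$ are distinct and satisfy $d(p_i,x)=d(p_i,y)$ for all $i\in I\setminus\{i_0\}$. The key point is that the single discarded coordinate $i_0$ can be handled by the orientation trick suggested in Remark \ref{rem:fin}. By symmetry between $x$ and $y$, we may assume $d(p_{i_0},y)\le d(p_{i_0},x)$. Then $d(p_i,y)\le d(p_i,x)$ holds for every $i\in I$: with equality for $i\ne i_0$, and by assumption for $i=i_0$. Since also $y\in\bar B$, this gives $y\in C_I(x)$. Because $y\ne x$, we get $C_I(x)\ne\{x\}$, contradicting $x\in H_I$. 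Hence $F_{I,i_0}$ is injective.

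This covers the degenerate case $m=0$ (that is, $I=\{i_0\}$) as well: the same argument shows $H_I$ contains at most one point, so the map to $\mathbb R^0$ is trivially an embedding. I do not expect any real obstacle here. The only subtle step is choosing which of $x,y$ plays the role of the base point, according to the sign of the $i_0$-th coordinate difference. This choice is exactly what allows one coordinate to be dropped.
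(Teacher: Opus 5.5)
Your proposal is correct and follows essentially the same route as the paper. The paper also reduces to injectivity using the compactness of $H_I$. It then uses $x,y\in H_I$ to find distinct indices $i,j\in I$ with $d(p_i,y)>d(p_i,x)$ and $d(p_j,y)<d(p_j,x)$, one of which must differ from $i_0$. Your contrapositive, which chooses the base point according to the sign of the $i_0$-th coordinate difference, is the same idea.
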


\begin{proof}
Since $F_{I,i_0}$ is continuous and the domain $H_I$ is compact by Lemma \ref{lem:clo}, it suffices to show that $F_{I,i_0}$ is injective.
Let $x\neq y\in H_I$.
By the definition of $H_I$, there exist distinct $i,j\in I$ such that
\[d(p_i,y)>d(p_i,x),\quad d(p_j,y)<d(p_j,x).\]
Since $i$ and $j$ are different, one of them must be different from $i_0$.
Thus we have $F_{I,i_0}(x)\neq F_{I,i_0}(y)$, as desired.
\end{proof}

Now we are in a position to prove Theorem \ref{thm:main}.

\begin{proof}[Proof of Theorem \ref{thm:main}]
As already mentioned, by Lemma \ref{lem:hel}, we have
\[\bar B=\bigcup_{\substack{\emptyset\neq I\subset\{1,\dots,N\}\\|I|\le n+1}}H_I,\]
where $n$ is the topological dimension of $X$.
By Lemma \ref{lem:clo}, $H_I$ is closed in $\bar B$.
By the Baire category theorem (or by arguing directly since the union is finite), one of the above $H_I$ has nonempty interior in $\bar B$, and hence in $X$.
Fix such an index subset $I$ and let $U$ be the interior of $H_I$ in $X$.
By Lemma \ref{lem:emb}, the map
\[F_{I,i_0}|_U:U\to\mathbb R^m\]
is a topological embedding, where $i_0\in I$ and $|I|=m+1$.

Since $|I|\le n+1$ as stated above, we have $m\le n$.
On the other hand, since $X$ is topologically homogeneous, $\dim U=n$.
Since $F_{I,i_0}|_U$ is a topological embedding, we have $n\le m$.
Therefore, $m=n$.

Recall that $X$ is a finite-dimensional, ANR integral homology manifold (\cite{Th96, BHR11}).
The invariance of domain theorem for such homology manifolds shows that the image of $F_{I,i_0}|_U$ is open in $\mathbb R^n$ (see \cite[p.383, Corollary V.16.19; p.375, Theorem V.16.8]{Bre97} for more details; see also \cite[p.122]{Bre97} for the equivalence of dimensions).
This gives a manifold point of $X$.
Since $X$ is topologically homogeneous, $X$ is a topological manifold.
\end{proof}

We give some geometric interpretation of the above proof.

\begin{rem}\label{rem:prf}
We consider the Euclidean case.
Let $p_i\in\mathbb R^n$ be arbitrary points, where $1\le i\le N$.
For any nonempty $I\subset\{1,\dots,N\}$ and $x\in\mathbb R^n$, put
\[D_I(x):=\bigcap_{i\in I}\bar B(p_i,|p_i-x|),\]
where $|\cdot|$ denotes the Euclidean norm.
Here, for simplicity, we do not introduce a fixed ball $\bar B$ as before.
Then it is easy to see that
\[D_I(x)=\{x\}\iff x\in\Conv\{p_i\mid i\in I\},\]
where $\Conv(\cdot)$ denotes the convex hull.
Indeed, if $x\in\Conv\{p_i\mid i\in I\}$, then moving $x$ in any direction increases at least one of the distances from $p_i$.
Conversely, if $x\notin\Conv\{p_i\mid i\in I\}$, then one can find a direction at $x$ that decreases all of the distances from $p_i$.

Therefore, if $D_{\{1,\dots,N\}}(x)=\{x\}$ for the full index set, $x$ is contained in the convex hull of all $p_i$.
Then by the Carath\'eodory theorem, one can find $I\subset\{1,\dots,N\}$ with $|I|\le n+1$ such that $x$ is contained in the convex hull of $p_i$ with $i\in I$, and hence $D_I(x)=\{x\}$.
Lemma \ref{lem:hel} based on Ivanov's Helly theorem is a generalization of this argument of finding a simplex with at most $n+1$ vertices.
The Baire-category argument in the proof of Theorem \ref{thm:main} then allows us to find a nondegenerate one, i.e., $|I|=n+1$.
\end{rem}

Finally, we prove Corollaries \ref{cor:sph} and \ref{cor:glo}.

\begin{proof}[Proof of Corollary \ref{cor:sph}]
Let $X$ be a G-space as in Theorem \ref{thm:main} and let $n$ be the topological dimension of $X$.
By the proof of Theorem \ref{thm:main}, there exist $x\in X$ and $p_1,\dots,p_n\in X$ near $x$ such that the distance map
\[F:=(d(p_1,\cdot),\dots,d(p_n,\cdot)):X\to\mathbb R^n\]
is an open topological embedding around $x$.
Consider a map with one component removed,
\[G:=(d(p_2,\cdot),\dots,d(p_n,\cdot)):X\to\mathbb R^{n-1},\]
and restrict it to a metric sphere $S$ through $x$ centered at $p_1$.
Then it shows that $x$ is a manifold point in $S$.

By the local convexity assumption, $X$ is \emph{locally G-homogeneous} in the sense of Berestovskii--Halverson--Repov\v s \cite{BHR11}; see Definition \ref{dfn:app}.
One of their results \cite[Theorem 1.1]{BHR11} says that $S$ is topologically homogeneous (we may assume that the radius of $S$ is sufficiently small so that the above result can be applied; the same convention applies below).
Since $S$ contains a manifold point, it is a topological manifold.
Moreover, as shown by the second author \cite{Gu19}, $S$ is homotopy equivalent to a sphere.
By the solution of the (generalized) Poincar\'e conjecture, $S$ is homeomorphic to a sphere.
Another result of Berestovskii--Halverson--Repov\v s \cite[Theorem 5.6]{BHR11} shows that all sufficiently small metric spheres in $X$ are homeomorphic.
This completes the proof.
\end{proof}

\begin{proof}[Proof of Corollary \ref{cor:glo}]
Let $X$ be a straight G-space with convex balls as in Corollary \ref{cor:glo}.
Fix $o\in X$ and suppose $r>0$ is sufficiently small.
By Corollary \ref{cor:sph} and the local cone structure of a G-space (\cite[Proposition 2.2]{Th96}, \cite[Proposition 3.3]{BHR11}), $\bar B(o,r)$ is homeomorphic to a closed Euclidean ball.
By the straightness assumption, $B(o,r)$ is homeomorphic to $X$: indeed, let $\phi:[0,r)\to[0,\infty)$ be a homeomorphism and consider a map
\[h:B(o,r)\to X\]
that sends any $x\in B(o,r)$ to the point at distance $\phi(d(o,x))$ from $o$ on the unique extension of the shortest path $ox$ beyond $x$.
This completes the proof.
\end{proof}

\begin{rem}
In fact, for the proof of Corollary \ref{cor:glo}, Theorem \ref{thm:main} is sufficient and Corollary \ref{cor:sph} is unnecessary.
Once a Euclidean neighborhood $V$ of $o\in X$ is found, one can enlarge it via a map similar to $h$ to construct an exhaustive sequence of open subsets $V_1\subset V_2\subset\cdots$ of $X$ that are homeomorphic to Euclidean space.
Then Brown's monotone-union theorem \cite{Br61} shows that $X$ itself is Euclidean.
\end{rem}

We conclude this section by proposing a potential generalization.

\begin{rem}
It is worth noting that the uniqueness of the extension of shortest paths was not used in most of the proof of Theorem \ref{thm:main}, specifically from Lemma \ref{lem:fin} to Lemma \ref{lem:emb}; we only used the strict convexity of metric balls and the existence of the extension.
The uniqueness of the extension only enters when we use the topological homogeneity and the homology manifold structure of a G-space in the final step of the proof of Theorem \ref{thm:main} (and also in Lemma \ref{lem:conv} at the beginning of the proof to upgrade the convexity to the strict one).
This suggests the possibility of developing a theory of singular spaces with strictly convex balls, which includes the present setting of G-spaces with convex balls as a special case.
See \cite[Section 1.2]{FGfin} and compare with the theory of GCBA/GNPC spaces in \cite{LN19,LN22,FGtop,FGgeo}.
However, we will not pursue this direction further in this paper.
\end{rem}

\appendix

\section{}\label{sec:app}

In this appendix, we answer Question 9.1 of Berestovskii--Halverson--Repov\v s \cite{BHR11}.
They introduced weaker variants of the convexity of metric balls, called \emph{(uniform) local G-homogeneity}, and asked whether such conditions hold for every G-space.
We give a counterexample to this question.

We first recall their terminology.
Let $X$ be a geodesic space.
A subset $A$ of $X$ is \emph{starlike} with respect to $p\in\In A$ if $A$ is a geodesic cone over $\partial A$ with vertex $p$; that is, every $x\in A\setminus\{p\}$ lies on a unique shortest path contained in $A$ from $p$ to some point of $\partial A$.
Similarly, $A$ is \emph{stably starlike} at $p\in\In A$ if there exists $\delta>0$ such that $A$ is starlike with respect to any $x\in B(p,\delta)$.
As usual, in the setting of G-spaces, we consider these conditions in a small neighborhood in which shortest paths are uniquely extendable.

\begin{dfn}\label{dfn:app}
Let $X$ be a G-space.
We say that $X$ is \emph{locally G-homogeneous} if for every $p\in X$, there exists sufficiently small $\epsilon>0$ (smaller than the radius of local unique extension) such that $\bar B(p,\epsilon)$ is stably starlike at $p$.

We say that $X$ is \emph{uniformly locally G-homogeneous on an orbal set} if there exist a subset $C\subset X$ and sufficiently small $0<\delta\le \epsilon_1<\epsilon_2$ such that for every $p\in C$ and $\epsilon\in(\epsilon_1,\epsilon_2)$, $\bar B(p,\epsilon)$ is starlike with respect to any $q\in B(p,\delta)$, and $C$ contains a ball of radius greater than $\epsilon_2$.
\end{dfn}

Note that the second condition is not necessarily stronger than the first: while the second condition possesses uniformity, it concerns only the subset $C$.
However, both conditions are weaker than the convexity of metric balls (cf.\ Lemma \ref{lem:conv}).
Berestovskii--Halverson--Repov\v s \cite{BHR11} proved that for any local G-homogeneous G-space, any sufficiently small metric sphere is topologically homogeneous.
They also proved that any G-space that is uniformly locally G-homogeneous on an orbal set is finite-dimensional.
The latter generalizes the earlier result of Berestovskii \cite{Be77} (but the proof is essentially the same).

In \cite[Question 9.1]{BHR11}, Berestovskii--Halverson--Repov\v s asked whether every G-space is locally G-homogeneous or uniformly locally G-homogeneous on an orbal set.
We give a negative answer to this question by proving the following.

\begin{prop}\label{prop:app}
For any integer $n\ge 2$, there exist a straight G-space metric on $\mathbb R^n$ and a dense subset $S\subset\mathbb R^n$ such that for any $p\in S$ and any $\epsilon>0$, $\bar B(p,\epsilon)$ is not stably starlike at $p$.
\end{prop}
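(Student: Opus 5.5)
We construct the example from Minkowski (normed) metrics: a straight G-space is obtained from a strictly convex norm, and more generally a Hilbert/Minkowski-type metric with straight lines as geodesics is a straight G-space. In a normed space $(\mathbb R^n,\|\cdot\|)$ with strictly convex unit ball, shortest paths are exactly affine segments, uniquely extendable to lines, so it is a straight G-space. Balls are convex, so we must leave the normed world. The plan is to take a \emph{projective (Desarguesian) metric}, i.e.\ a metric on $\mathbb R^n$ whose geodesics are affine lines but which varies from point to point. Such metrics can be produced via Busemann--Pogorelov/Crofton constructions: for a positive continuous measure $\mu$ on the space of affine hyperplanes, set $d(x,y)=\mu(\{\text{hyperplanes separating }x,y\})$. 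If $\mu$ is everywhere positive and locally finite with infinite total mass in every direction, this is a straight G-space metric on $\mathbb R^n$ with lines as geodesics. With straight geodesics, starlikeness of $\bar B(p,\epsilon)$ with respect to $q$ means that every ray from $q$ crosses $\partial\bar B(p,\epsilon)$ exactly once. Equivalently, $d(p,\cdot)$ restricted to each ray from $q$ is eventually monotone after the exit, with no re-entry.

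Next we design $\mu$ so that balls are not stably starlike at points of a dense set $S$. Start with the Euclidean-type measure $\mu_0$ (the one giving the Euclidean metric) and add countably many small perturbations $\mu=\mu_0+\sum_k c_k\nu_k$, where $\nu_k$ is concentrated near hyperplanes passing close to a point $s_k$ of a countable dense set $S=\{s_k\}$. The aim is that for each $\epsilon$ the sphere $\partial\bar B(s_k,\epsilon)$ has a non-convex ``dent'' arbitrarily close to $s_k$'s radial directions. Concretely, we choose $\nu_k$ as a sum over scales $\epsilon_j\to0$ (and, to handle all $\epsilon>0$, over a dense set of scales) of atoms or thin bumps on families of hyperplanes tangent to a small region of the sphere of radius $\epsilon_j$, producing a sharp inward dent. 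Then for points $q$ arbitrarily close to $s_k$, some ray from $q$ enters the dent, exits the ball, and re-enters, violating starlikeness for $q$. This holds for every small $\delta$. Since starlikeness relative to $q$ is a statement about radial monotonicity from $q$, the dent must be ``non-radial'' with respect to nearby $q\neq p$ while remaining radial with respect to $p$ itself. The latter is automatic for lines through $p$, since distance from $p$ along a line through $p$ is monotone.

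The main obstacle is controlling the countable superposition. Each perturbation must keep $\mu$ locally finite and positive, so the metric remains a straight G-space on $\mathbb R^n$. Later perturbations at other points $s_{k'}$ must not destroy the dents created at $s_k$. We also need dents at all radii $\epsilon>0$, not just a sequence. We would handle this inductively, choosing $c_k$ rapidly decreasing. Non-starlikeness at a given radius is an open condition under small perturbation of $\mu$ when witnessed by a strict re-entry at finitely many points, so $c_{k+1}$ can be chosen small enough to preserve all previously established witnesses. Covering every radius is the subtle part. Our first attempt is to place, for each $s_k$, bumps at a dense sequence of radii with summable masses. We then argue that for any $\epsilon$ there is a bump at some radius $\epsilon'$ close to $\epsilon$ whose dent is deep relative to $|\epsilon-\epsilon'|$. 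If that fails, we weaken to an explicit hand-built planar example for $n=2$, taking products or rotations for higher $n$.
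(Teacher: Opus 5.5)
Your setting is sensible: Crofton-type metrics $d(x,y)=\mu(\{h : h\cap[x,y]\neq\emptyset\})$ with $\mu=\mu_0+\nu$. You must exclude atoms, though, since an atom on a hyperplane makes $d$ jump across it and the topology is no longer Euclidean. The paper's example in fact lies in this class. \textbf{There is a genuine gap.} The mechanism you propose for breaking starlikeness does not work, and the two issues you leave open (all radii at once, survival under the countable superposition) are exactly where the proof has to happen.

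The relevant fact is a monotonicity. If $h$ does not meet $[p,q]$, then $h$ meets $[p,q+tu]$ if and only if it meets $[q,q+tu]$, which is monotone in $t$. So along a ray from $q$, the only hyperplanes that can make $d(p,\cdot)$ decrease are those meeting $[p,q]$, and an exit followed by a re-entry requires such a decrease. These hyperplanes all pass within $|p-q|$ of $p$. In particular, if $\nu$ is carried by hyperplanes missing some ball $B(p,\rho)$, then every $\bar B(p,\epsilon)$ is starlike with respect to every $q\in B(p,\min\{\rho,\epsilon\})$.

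Hence a dent on the sphere of radius $\epsilon$, made by hyperplanes at distance $\approx\epsilon$ from $p$, is irrelevant for the points $q$ that matter, namely those arbitrarily close to $p$. What you need is $\nu$-mass on hyperplanes passing between $p$ and $q$ that the ray from $q$ crosses, near the sphere, faster than the Euclidean distance grows. Such hyperplanes meet that ray at angle $O(|p-q|/\epsilon)$, so a density bounded by $M$ only yields witnesses with $|p-q|\gtrsim\epsilon/M$. The density must therefore be unbounded on hyperplanes near $p$.

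Your inductive step also fails as stated. Choosing $c_{k+1}$ small protects only finitely many strict witnesses. Each $p\in S$ needs witnesses for every $\epsilon>0$ and for $q\to p$, with margins tending to $0$. The planar/product fallback is not specified.

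The missing idea is to put a H\"older (non-Lipschitz) singularity on a hyperplane through $p$ itself. The paper takes $d_\phi(x,y)=|x-y|+|\phi(x_1)-\phi(y_1)|$ with $\phi(s)=\sgn(s)|s|^\alpha$, $0<\alpha<1$. In your language this is $\mu_0$ plus the measure $d\phi(c)$ on the parallel hyperplanes $\{x_1=c\}$. For any continuous strictly increasing $\phi$, this is a straight G-space metric on $\mathbb R^n$ whose geodesics are affine segments, because equality in the triangle inequality forces collinearity through the Euclidean term.

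Take $p\in H=\{x_1=0\}$, $q=p+\delta e_1$, $x=p+\epsilon e_2\in\partial\bar B_\phi(p,\epsilon)$ and $x_t=tq+(1-t)x$. Then $d_\phi(p,x_t)\ge(1-t)\epsilon+(t\delta)^\alpha>\epsilon$ for small $t$: the sublinear gain beats the linear loss. So the segment $qx$ leaves the ball before reaching $x$. Since $H$ passes through $p$, it meets every sphere about $p$. This handles all $\epsilon$ and all $\delta$ at once, in every dimension $n\ge2$.

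For a dense set, the paper takes $\phi(s)=\sum_j2^{-j}\tanh(\sgn(s-a_j)|s-a_j|^\alpha)$ over a dense $\{a_j\}\subset\mathbb R$. Every summand is increasing, so $\phi(a_j+h)-\phi(a_j)\ge2^{-j}\tanh(h^\alpha)$ holds automatically. Extra mass on the hyperplanes $\{x_1=c\}$ only increases $d_\phi(p,x_t)$ and leaves $d_\phi(p,x)=\epsilon$ unchanged. The witnesses therefore survive the superposition with no inductive bookkeeping.
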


To prove it, we first consider a simple case where $S$ is a hyperplane
\[H:=\{x_1=0\},\]
where $x_1$ denotes the first coordinate of $\mathbb R^n$.
The modification to a dense subset will be discussed later.

Fix $0<\alpha<1$ and define a function
\begin{equation}\label{eq:phi}
\phi(s):=\sgn(s)|s|^\alpha,
\end{equation}
where $\sgn$ is the sign function.
Note that $\phi$ is continuous and strictly increasing.
We introduce a new metric $d_\phi$ on $\mathbb R^n$ by
\begin{equation}\label{eq:dphi}
d_\phi(x,y):=|x-y|+|\phi(x_1)-\phi(y_1)|,
\end{equation}
where $|\cdot|$ denotes the Euclidean norm and $x_1,y_1$ denote the first coordinates of $x,y\in\mathbb R^n$, respectively.
Clearly $d_\phi$ is a metric.

\begin{lem}\label{lem:app1}
The space $(\mathbb R^n,d_\phi)$ is a straight G-space such that its metric topology is the Euclidean topology and its geodesic segments are the affine segments (possibly with nonlinear parametrization).
\end{lem}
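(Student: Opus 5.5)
The plan is to exploit that $d_\phi$ is the sum of the Euclidean metric and a pulled-back one-dimensional metric. First, the topology. Since $\phi$ is a homeomorphism of $\mathbb R$, the function $x\mapsto\phi(x_1)$ is continuous, so $d_\phi$-convergence is equivalent to Euclidean convergence. Next, completeness and properness: $d_\phi\ge|x-y|$, so $d_\phi$-bounded sets are Euclidean bounded. A $d_\phi$-Cauchy sequence is Euclidean Cauchy and has a Euclidean limit $x$, and by continuity it converges to $x$ in $d_\phi$. Closed $d_\phi$-balls are Euclidean closed and bounded, hence compact. This gives completeness and local compactness.

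Second, determine the shortest paths. For a curve $\gamma$ from $x$ to $y$, its $d_\phi$-length is at least the Euclidean length plus the total variation of $\phi(\gamma_1)$. Hence
\[L_{d_\phi}(\gamma)\ge |x-y|+|\phi(x_1)-\phi(y_1)|=d_\phi(x,y).\]
Equality holds if and only if two conditions both hold. The curve must be a monotone reparametrization of the affine segment, by the equality case for Euclidean length. The coordinate $\gamma_1$ must be monotone, which is automatic along a segment since $\phi$ is increasing. Conversely, the affine segment, parametrized by $d_\phi$-arclength, realizes the distance. The key additive identity is that for $z$ on the segment $[x,y]$,
\[d_\phi(x,z)+d_\phi(z,y)=d_\phi(x,y).\]
This holds because both the Euclidean term and the $\phi$-term are additive when $z_1$ lies between $x_1$ and $y_1$. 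So the space is geodesic, and uniquely geodesic, with geodesics equal to affine segments. Uniqueness also needs that any shortest path has length equal to $d_\phi$. The path's $d_\phi$-length bounds the Euclidean length plus the variation of $\phi\circ\gamma_1$, and both pieces must be tight.

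Third, extension and straightness. Any segment extends to the full affine line $t\mapsto x+tv$. I must check that this line, parametrized by $d_\phi$-arclength, is an isometric embedding of $\mathbb R$. This follows from the additivity above, since every subsegment is shortest. The $d_\phi$-arclength tends to infinity in both directions because it dominates Euclidean length. Uniqueness of extension also holds. An extension of a shortest path $xy$ beyond $y$ that remains shortest contains $x$, $y$ and a further point $w$ on a common shortest path. By uniqueness of geodesics, $[x,w]$ is affine and contains $y$, so $w$ lies on the line through $x,y$ beyond $y$. Together these give the local unique extension property globally, with every geodesic extending to a line, so the space is straight.

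The main obstacle is the equality analysis showing every shortest path is affine. The delicate point is $\alpha<1$, where $\phi$ is not Lipschitz near $0$. I would handle it by working with lengths as suprema over partitions rather than derivatives, so no differentiability is needed. The $\phi$-term is monotone along any path with monotone first coordinate, and it only adds a nonnegative variation otherwise.
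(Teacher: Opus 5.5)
Your proposal is correct and follows essentially the same route as the paper: you prove the topology and completeness the same way, and you identify shortest paths as affine segments by splitting $d_\phi$ into its Euclidean part and its $\phi$-part and using that equality must hold in each. The paper does this pointwise, through the equality case of $d_\phi(x,z)\le d_\phi(x,y)+d_\phi(y,z)$, which forces $|x-y|+|y-z|=|x-z|$; so your detour through curve lengths over partitions, and your worry about $\phi$ being non-Lipschitz when $\alpha<1$, are harmless but unnecessary.
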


\begin{proof}
We check Definition \ref{dfn:g} along with the additional requirements.
Since $d_\phi$ is not less than the Euclidean distance, its metric topology is finer than the Euclidean topology.
Conversely, since $\phi$ is continuous, the Euclidean topology is finer than the metric topology.
Therefore, the two topologies are the same, and in particular $(\mathbb R^n,d_\phi)$ is locally compact.
Similarly, since any $d_\phi$-Cauchy sequence is a Euclidean Cauchy sequence, we see that $d_\phi$ is a complete metric.

Furthermore, since $\phi$ is strictly increasing, we have the following: for any distinct $x,y,z\in\mathbb R^n$, the triangle inequality
\[d_\phi(x,z)\le d_\phi(x,y)+d_\phi(y,z)\]
becomes an equality if and only if $y$ lies on the affine segment between $x$ and $z$.
This shows that the geodesic segments of $(\mathbb R^n,d_\phi)$ are the affine segments (possibly with nonlinear parametrization).
In particular, $(\mathbb R^n,d_\phi)$ satisfies the global unique extension property, and hence it is a straight G-space.
\end{proof}

We denote by $\bar B_\phi(\cdot,\cdot)$ a closed ball with respect to $d_\phi$.
Recall $H:=\{x_1=0\}$.
The key observation is the following.

\begin{lem}\label{lem:app2}
Let $p\in H$.
For any $\epsilon>0$, $\bar B_\phi(p,\epsilon)$ is not stably starlike at $p$.
\end{lem}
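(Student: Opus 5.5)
We take $q$ to be a point of $B_\phi(p,\delta)$ off the hyperplane $H$ and $x$ to be a suitable ``tip'' of the ball $A:=\bar B_\phi(p,\epsilon)$. The plan is to show that the only possible shortest path from $q$ to $x$ is not contained in $A$. This violates starlikeness with respect to $q$, and since such $q$ exist arbitrarily close to $p$, it rules out stable starlikeness at $p$.

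By Lemma \ref{lem:app1}, shortest paths are affine segments. Translating along $H$ (which preserves $d_\phi$), we may assume $p=0$, so that
\[A=\{x\in\mathbb R^n\mid |x|+|x_1|^\alpha\le\epsilon\}.\]
Fix any $\delta>0$ and put $q:=\eta e_1$ with $0<\eta$ small enough that $\eta+\eta^\alpha<\min\{\delta,\epsilon\}$. Then $q\in B_\phi(p,\delta)$ and $q\in\In A$. Since $n\ge2$, we may put $x:=\epsilon e_2$. Then $d_\phi(p,x)=\epsilon$, so $x\in\partial A$ and $x\neq q$.

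If $A$ were starlike with respect to $q$, then $x$ would lie on a shortest path contained in $A$ from $q$ to some point of $\partial A$. That path is an affine segment starting at $q$ and passing through $x$, so in particular the affine segment $qx$ would be contained in $A$. We will show it is not, namely that points of $qx$ close to $x$ lie outside $A$.

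This is the key computation, and it is where $\alpha<1$ is used. Parametrize $y(s):=x+s(q-x)$ for small $s>0$. Its first coordinate is $s\eta$. A first-order expansion gives
\[|y(s)|=\epsilon-s\epsilon+O(s^2),\]
hence
\[d_\phi(p,y(s))=\epsilon-s\epsilon+(s\eta)^\alpha+O(s^2).\]
Since $\alpha<1$, the term $(s\eta)^\alpha$ dominates $s\epsilon$ as $s\to0$, so $d_\phi(p,y(s))>\epsilon$ for all sufficiently small $s>0$. Thus $qx\not\subset A$. Geometrically, $A$ has an inward cusp along $H$ at $x$, and every non-tangent segment reaching the cusp tip from off $H$ must leave $A$ just before arriving.

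The only mildly delicate point I expect is making the expansion rigorous, specifically bounding the error in $|y(s)|$ uniformly. This follows from the triangle inequality, $|y(s)|\ge\epsilon-s|x-q|$, combined with comparing $s$ against $s^\alpha$, so no real obstacle is anticipated. Since $\delta>0$ was arbitrary, $\bar B_\phi(p,\epsilon)$ is not stably starlike at $p$.
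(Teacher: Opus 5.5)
Your proposal is correct and is essentially the paper's argument: the same $q$ on the $e_1$-axis, the same boundary point $x=p+\epsilon e_2$, and the same estimate $d_\phi(p,y(s))\ge(1-s)\epsilon+(s\eta)^\alpha>\epsilon$ for points of $qx$ near $x$. The paper gets $|y(s)|\ge(1-s)\epsilon$ by dropping the $e_1$-component, which removes any error term, and you are slightly more careful than the paper in choosing $\eta$ so that $q\in B_\phi(p,\delta)$ and in explaining why starlikeness with respect to $q$ forces $qx\subset A$.
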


\begin{proof}
Let $e_1,e_2$ be the first two standard coordinate vectors in $\mathbb R^n$.
Fix arbitrary small $0<\delta\ll\epsilon$.
Set
\[q:=p+\delta e_1,\quad x:=p+\epsilon e_2.\]
Clearly $q,x\in\bar B_\phi(p,\epsilon)$ (in fact, $q$ is very close to $p$ whereas $x$ lies on the boundary).
For sufficiently small $t>0$, consider
\[x_t:=tq+(1-t)x=p+t\delta e_1+(1-t)\epsilon e_2,\]
i.e., a point on the shortest path $qx$ that is sufficiently close to $x$.
Then, by \eqref{eq:phi} and \eqref{eq:dphi}, we have
\begin{equation}\label{eq:dphit}
\begin{aligned}
d_\phi(p,x_t)&=\sqrt{(t\delta)^2+((1-t)\epsilon)^2}+\phi(t\delta)\\
&\ge (1-t)\epsilon+(t\delta )^\alpha.
\end{aligned}
\end{equation}
Since $0<\alpha<1$, if $t$ is small enough compared to $\epsilon$ and $\delta$, we have $d_\phi(p,x_t)>\epsilon$.
Since $\delta$ is arbitrary, this shows that $\bar B_\phi(p,\epsilon)$ is not stably starlike.
\end{proof}

\begin{rem}
The idea behind the above argument is as follows.
When moving from $x$ to $q$, the Euclidean distance to $p$ decreases at most linearly in $t$, whereas $\phi$ increases on the order of $t^\alpha$.
Consequently, the total distance increases.
\end{rem}

Now we prove Proposition \ref{prop:app}.

\begin{proof}[Proof of Proposition \ref{prop:app}]
We modify the above construction to obtain a dense set of non-stably starlike points.
Fix $0<\alpha<1$ and take an arbitrary countable dense subset $\{a_j\}_{j=1}^\infty$ of $\mathbb R$.
Define a function
\begin{equation}\label{eq:phi'}
\phi(s):=\sum_{j=1}^\infty2^{-j}\tanh(\sgn(s-a_j)|s-a_j|^\alpha)
\end{equation}
This function possesses similar properties to the previous one at each $a_j$.

\begin{lem}\label{lem:phi'}
The function $\phi$ is continuous and strictly increasing.
Furthermore, for every $j$ there exists $c_j>0$ such that
\begin{equation}\label{eq:phih}
\phi(a_j+h)-\phi(a_j)\ge c_jh^\alpha
\end{equation}
for any sufficiently small $h>0$.
\end{lem}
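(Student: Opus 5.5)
Write $g(s):=\tanh(\sgn(s)|s|^\alpha)$, so that $\phi(s)=\sum_j 2^{-j}g(s-a_j)$. The plan is to establish continuity, strict monotonicity and \eqref{eq:phih} for each summand separately, and then transfer them to the series. Since uniform convergence preserves continuity and sums of nondecreasing functions are nondecreasing, the only genuinely quantitative step is the lower bound at $a_j$.

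\emph{Properties of the summands.} First I check that $g$ is continuous, bounded by $1$ in absolute value, and strictly increasing on $\mathbb R$. This holds because $s\mapsto\sgn(s)|s|^\alpha$ is continuous and strictly increasing (it is odd and increasing on $[0,\infty)$), and $\tanh$ is continuous and strictly increasing.

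\emph{Continuity and monotonicity of $\phi$.} Each term satisfies $|2^{-j}g(s-a_j)|\le 2^{-j}$, so the Weierstrass M-test shows the series converges uniformly on $\mathbb R$, and $\phi$ is continuous. For $s<s'$, every term satisfies $g(s'-a_j)-g(s-a_j)\ge0$, and the first term is strictly positive. Hence $\phi(s')-\phi(s)>0$, and $\phi$ is strictly increasing.

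\emph{The lower bound \eqref{eq:phih}.} Fix $j$. Since every difference $g(a_j+h-a_k)-g(a_j-a_k)$ with $k\neq j$ is nonnegative, I drop all those terms:
\[
\phi(a_j+h)-\phi(a_j)\ \ge\ 2^{-j}\bigl(g(h)-g(0)\bigr)=2^{-j}\tanh(h^\alpha).
\]
It remains to bound $\tanh(u)$ from below for small $u=h^\alpha$. On $[0,1]$, $\tanh$ is concave with $\tanh(0)=0$, so $\tanh u\ge(\tanh 1)\,u$ there. Therefore, for $0<h\le1$,
\[
\phi(a_j+h)-\phi(a_j)\ge 2^{-j}\tanh(1)\,h^\alpha,
\]
and one may take $c_j:=2^{-j}\tanh 1$.

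I do not expect any real obstacle here. The one point requiring care is noticing that the tail terms are dropped by monotonicity and never need to be estimated.
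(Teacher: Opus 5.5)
Your proposal is correct and follows the paper's proof: uniform convergence gives continuity, strict monotonicity of every summand gives strict monotonicity of $\phi$, and keeping only the $j$-th summand gives $\phi(a_j+h)-\phi(a_j)\ge 2^{-j}\tanh(h^\alpha)$. The only difference is that you make the constant explicit, $c_j=2^{-j}\tanh 1$ for $0<h\le 1$, using concavity of $\tanh$ on $[0,1]$, whereas the paper leaves that last step implicit.
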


\begin{proof}
Since the series in \eqref{eq:phi'} converges uniformly, $\phi$ is continuous.
Since every summand in \eqref{eq:phi'} is strictly increasing, $\phi$ is strictly increasing.
For small $h>0$, by looking at the $j$-th summand, we have
\[\phi(a_j+h)-\phi(a_j)\ge2^{-j}\tanh(h^\alpha),\]
which shows the desired inequality.
\end{proof}

Now the preceding argument applies with the following modifications.
We introduce a metric on $\mathbb R^n$ by the same formula \eqref{eq:dphi} with this new $\phi$.
Then Lemma \ref{lem:app1} holds exactly as before, since the proof relies only on the continuity and strict monotonicity of $\phi$ proved in Lemma \ref{lem:phi'}.
Lemma \ref{lem:app2} also holds by replacing $H$ with $H_j:=\{x_1=a_j\}$ and using \eqref{eq:phih} in place of \eqref{eq:dphit}.
Since the union of $H_j$ is dense in $\mathbb R^n$, this completes the proof.
The details are left to the reader.
\end{proof}

The following natural question remains, but we will not pursue it here.

\begin{ques}
Is any G-space that is uniformly locally G-homogeneous on an orbal set a topological manifold?
More specifically, can the proof of Theorem \ref{thm:main} be adapted to this setting?
\end{ques}

\section*{Statement on the use of AI}

Artificial intelligence was used during the exploratory stage of this research. In particular, the authors used ChatGPT, including a GPT-5.6 Sol Ultra Codex configuration through VS Code, to investigate possible approaches to the Busemann conjecture and related research projects and questions. The AI was provided with relevant references, previous manuscripts, and mathematical context arising from the authors' earlier work and discussions, including Berestovskii's finite-dimensionality argument and approaches involving convexity, Baire-category arguments, and topological methods.

A Helly-type theorem had already appeared in an unpublished manuscript of one of the authors and was among the mathematical ideas available in this context, although it had not been connected there with Berestovskii's distance-coordinate construction or used to prove the convex-ball version of the Busemann conjecture. During a broader AI-assisted exploration of several possible approaches for about 56 hours, ChatGPT identified a way to combine this Helly-theoretic direction with Berestovskii's construction and suggested using Ivanov's Helly theorem to reduce the number of distance coordinates. This synthesis led to the main argument of the present paper.

The resulting proof was subsequently formulated, simplified, and checked by the authors. In particular, the authors verified the oriented distance inequality underlying the distance-coordinate construction, the application of Ivanov's Helly theorem, the Baire-category argument, the distance-coordinate embedding, and the use of invariance of domain for homology manifolds. AI tools were also used during the exploratory process to examine possible examples and counterexamples, identify connections with the existing literature, organize references, and check intermediate arguments.

The role of AI in this work should therefore be understood as part of the mathematical exploration and discovery process rather than as an independent source of the final proof. The mathematical background, relevant references and prior ideas, assessment of the AI-generated suggestions, and the final formulation and verification of the results were provided by the authors. The authors have independently reviewed the arguments in the manuscript and take full responsibility for the correctness and presentation of its contents.

\printbibliography

\end{document}